\documentclass{ifacconf}

\usepackage{xcolor}
\usepackage{graphicx}
\usepackage{natbib}       
\usepackage{amsmath,amssymb}
\usepackage{subcaption}
\usepackage{enumitem}

\usepackage{eso-pic}
\AddToShipoutPictureBG*{%
\AtPageUpperLeft{%
\setlength\unitlength{1in}%
\hspace*{\dimexpr0.5\paperwidth\relax}
\makebox(0,-1.75)[c]{
\begin{tabular}{c c}
Timo de Groot \emph{et al.},
Scaled Graph Bounding Techniques for Reset Systems, \\
To appear in 
{\em 23rd IFAC World Congress},
Busan, South Korea, 2026,
uploaded to ArXiv 20 May 2026 \\
\end{tabular}}}}

\begin{document}
\begin{frontmatter}
\title{Scaled Graph Bounding Techniques for Reset Systems} 
\thanks[footnoteinfo]{The research leading to these results has partially received funding from the European Research Council under the Advanced ERC Grant Agreement PROACTHIS, no. 101055384.}
\author[first]{T. de Groot,} 
\author[first]{W.P.M.H.  Heemels,}
\author[first,second]{T. Oomen,}
\author[first]{S.J.A.M. van den Eijnden} 
\address[first]{Eindhoven University of technology, 
   Eindhoven,  5612 AE Netherlands\\  t.d.groot2@tue.nl, W.P.M.H.Heemels@tue.nl, t.a.e.oomen@tue.nl, s.j.a.m.v.d.eijnden@tue.nl }
   \address[second]{ Delft Center for Systems and Control, Delft University of Technology, Delft, The Netherlands}

\begin{abstract}                
Reset systems can overcome fundamental limitations of linear time-invariant control. The recently introduced notion of scaled (relative) graphs provides a promising framework for developing  graphical analysis and design tools for reset systems, in line with widely adopted loopshaping methods for linear systems. The aim of this paper is to derive techniques for over-bounding the scaled graph of reset systems, and obtain insights in their accuracy. We exploit connections between quadratic dissipativity and scaled graphs to recast the over-bounding problem as the search for piecewise quadratic storage functions. Using specific sampling techniques, we reveal a fundamental limitation of general scaled graph approximation methods that are based on quadratic dissipativity.   
\end{abstract}

\begin{keyword}
Scaled Graphs, Reset Systems, Linear Matrix Inequalities.
\end{keyword}

\end{frontmatter}

\section{Introduction}
Reset systems, first introduced by \cite{clegg_nonlinear_1958}, can overcome fundamental performance limitations of linear time-invariant (LTI) control \citep{beker_plant_2001,zheng_experimental_2000}. For this reason, reset systems have received considerable interest from both researchers and practitioners over the past decades, see, e.g.,  \citet{zaccarian_first_2005,chait_horowitzs_2002,nesic_stability_2011,aangenent_performance_2010,beker_plant_2001,zheng_experimental_2000,HosseinFrequency}. The adoption of reset controllers in industry is, however, limited, in no small part due to complexity in design and analysis. The nonlinear nature of reset systems limits the use of classical frequency-domain tools such as Bode and Nyquist diagrams \citep{j_macfarlane_multivariable_1976}. The development of intuitive design and analysis tools for reset systems is therefore of great value.

Recently, the notion of scaled relative graphs (SRGs) and scaled graphs (SGs) has been introduced in \cite{ryu_scaled_2022} within the optimization setting, and later brought to the systems and control community in \cite{chaffey_graphical_2023}. In essence, SRGs and SGs provide a graphical representation of the input-output behavior of a system, allowing for intuitive graphical analysis of nonlinear systems in general, and reset systems in particular \citep{van_den_eijnden_scaled_2024,groot_dissipativity_2025,degrootExploitingStructureMIMO2025,krebbekx_reset_2025}. A major challenge in SG analysis comes from obtaining the SG of the systems in question. For LTI systems, exact methods are available \citep{nauta2025computable,krebbekx_graphical_2025,groot_dissipativity_2025}, but for nonlinear systems this is not yet the case, and one should resort to over-bounding techniques.

Recent approaches for over-bounding SGs of reset systems exploit the connection between integral quadratic constraints (IQCs) and SGs, and focus on verifying IQC properties by searching for common quadratic storage functions \citep{van_den_eijnden_scaled_2024, groot_dissipativity_2025}. Though simple and useful, it is known that the use of common quadratic Lyapunov and storage functions can lead to conservatism in analysis, which can be mitigated by resorting to the more versatile class of piecewise quadratic (PWQ) functions \citep{aangenent_performance_2010,johanssonComputationPiecewiseQuadratic1998a,zaccarian_first_2005,nesic_stability_2011}.

The aim of this paper is twofold. We first explore the use of PWQ functions for over-bounding SGs of reset systems. In line with the approach in \cite{groot_dissipativity_2025}, we formulate the over-bounding problem as a convex optimization procedure expressed in terms of linear matrix inequalities (LMIs). The approach allows for a significant reduction of the SG over-bound as compared to methods based on single quadratic functions, as we illustrate in examples for both single-input single-output (SISO) and multi-input multi-output (MIMO) reset systems. Second, we examine the accuracy of the over-bounding techniques. From a finite set of sampled SG points, we derive a lower bound on the achievable over-bounding region. In other words, we identify the smallest over-bounding region our method may be able to find for the system at hand. As we will show, this result reveals that, in certain cases where quadratic dissipativity is exploited, there is no point in going beyond the class of PWQ storage functions (e.g., polynomial functions) to further reduce the over-bounding SG region. 

The remainder of this paper is organized as follows. In Section 2, a background on signals, systems, and SGs is provided and the problem statement is formulated. Next, in Section 3, the over-bounding method based on PWQ storage functions is detailed. Section 4 discusses the accuracy by exploiting a finite number of SG samples. The results are supported by numerical examples throughout. The conclusions are presented in Section 5.

\textbf{Notation:} The fields of real and complex numbers are denoted by $\mathbb{R}$ and $\mathbb{C}$. The sets of non-negative and positive real numbers are denoted $\mathbb{R}_{\geq0}$ and $\mathbb{R}_{>0}$, respectively. The sets of complex numbers with non-negative real part and positive real part are denoted $\mathbb{C}_{\geq0}$ and $\mathbb{C}_{>0}$, while the sets of complex numbers with non-negative imaginary part and non-positive imaginary part are denoted by $\mathbb{C}_+$ and $\mathbb{C}_-$. For any complex number $x\in\mathbb{C}$, we denote $\bar{x}$ as its complex conjugate. For any set $\mathcal{X}\subset \mathbb{C}$, $\textup{conj}(\mathcal{X})$ denotes element wise conjugation i.e. $\textup{conj}(\mathcal{X}):=\{\bar{x}\mid x\in\mathcal{X}\}$. The set of natural numbers (including zero) is denoted $\mathbb{N}$. The set of $n\times n$ symmetric matrices is denoted $\mathbb{S}^n$, and the set of symmetric matrices with positive entries is denoted $\mathbb{S}^n_{\geq0}$. For some $W\in\mathbb{S}^n$, we write $W\succ0$  (resp. $W\succeq0$) to indicate positive definiteness (resp. positive-semidefiniteness). The $n\times n$ identity matrix is written as $I_n$. Vector inequalities are interpreted element wise. A matrix $A\in\mathbb{S}^{n}$ with $a_1,...,a_n$ on its diagonal and $0$ elsewhere is written as $A = \text{diag}(a_1,a_2,...,a_n)$. The Kronecker product is denoted by $\otimes$. 

\section{Background and Problem statement}
\subsection{Reset Systems}
 In this paper, we consider reset systems with state based resetting of the form 
\begin{align}
	\mathfrak{R}: \left\{\begin{aligned}
		\dot{x}(t)  &= Ax(t)+Bu(t), & \textnormal{if}\; x(t)\in\mathcal{F},\\
		x(t^+) &=Rx(t), & \textnormal{if}\;x(t)\in \mathcal{J},\\
		y(t) &=Cx(t)+Du(t), &
	\end{aligned} \right. \label{eq:canonical_reset_system}
\end{align}
with states $x(t)\in\mathbb{R}^n$, initial condition $x(0)=0$, input $u(t)\in\mathbb{R}^p$, output $y(t)\in \mathbb{R}^p$ all at time $t \in \mathbb{R}_{\geq 0}$, and where $x(t^+) = \lim_{s \downarrow t}x(s)$. The matrix $R \in \mathbb{R}^{n\times n}$ defines the reset map, and the jump and flow sets are given by
\begin{align}
	\mathcal{F} &= \{\mu \in \mathbb{R}^{n} \;|\;\mu^\top M\mu\geq 0\} \label{eq:canonical_flow_set_reset},\\
	\mathcal{J} &= \{\mu \in \mathbb{R}^{n} \;|\; \mu^\top M\mu<0\}\label{eq:canonical_jump_set_reset},
\end{align}
with $M\in\mathbb{S}^{n}$. We write $y\in\mathfrak{R} u$ to denote the (possibly multi-valued) output $y$ as a consequence of applying the input $u$ to the reset system $\mathfrak{R}$ in \eqref{eq:canonical_reset_system}. 

Let the space of square-integrable signals be given by
\begin{align} 
\mathcal{L}_2:=\left\{u:\mathbb{R}_{\geq0}\rightarrow\mathbb{R}^p \mid \int_0^\infty u^\top(t)u(t) dt < \infty \right\}.
\end{align} 
For $u,y\in \mathcal{L}_2$ we define the inner-product $\langle u,y\rangle=\int_0^\infty u^\top\!(t)y(t)\;dt$ and norm $\|u\| = \sqrt{\langle u,u\rangle}$.

We make the following standing assumptions.
\begin{assum}\label{ass:stable_state}
The reset system in \eqref{eq:canonical_reset_system} with input $u\in\mathcal{L}_2$: 
\begin{enumerate}[label=\alph*)]
    \item exhibits no Zeno behavior;
    \item   satisfies for $u\in\mathcal{L}_2$, $x,y\in\mathcal{L}_2$  and $\lim_{t\to \infty} x(t) = 0$.
\end{enumerate}
\hfill $\square$
\end{assum}
Item a) in Assumption~\ref{ass:stable_state} removes the possibility of an infinite number of resets in a finite time interval. Item b) imposes a stability property, which holds if the matrix $A$ in \eqref{eq:canonical_reset_system} is Hurwitz. 

\subsection{Scaled Graphs}
For a reset system $\mathfrak{R}$ in \eqref{eq:canonical_reset_system} and some input $u\in\mathcal{L}_2$, we define the set of complex numbers
\begin{align} \label{eq:zu}
z(u,\mathfrak{R}) = \left\{ \left.\rho(u,y)e^{\pm j\theta(u,y)} \right|\;  \:y\in\mathfrak{R}u   \right\},
\end{align} 
with $\rho(u,y) = \|y\|/\|u\|$,  and $\theta(u,y)=\arccos\left(\tfrac{\langle u,y\rangle}{\|u\|\|y\|}\right)$ if $y\neq 0$, and $\theta(u,y)=0$ if $y=0$. The SG of $\mathfrak{R}$ in \eqref{eq:canonical_reset_system} is then defined as 
\begin{align}\label{eq:SG}
    \textup{SG}(\mathfrak{R}) =  \left\{ z(u,\mathfrak{R}) \mid u \in \mathcal{L}_2 \setminus \{0\} \right\}.
\end{align}
The power of SGs comes from the fact that they can be used to graphically check stability of a feedback loop containing nonlinear systems, similar to the use of Nyquist diagrams to verify stability of LTI feedback loops.

\subsection{Dissipativity and Scaled Graphs}
Next, we recall the main connection between dissipativity, IQCs, and SGs for reset systems from \cite{groot_dissipativity_2025}.
\begin{lem}\label{lem:connection_to_SG} Consider a symmetric matrix $\Pi\in \mathbb{S}^2$. Then, the reset system $\mathfrak{R}$ in \eqref{eq:canonical_reset_system} satisfies the IQC 
\begin{align} \label{eq:Supply_inequality}
\int_{0}^{\infty}\begin{bmatrix} y(t) \\ u(t) \end{bmatrix}^\top \left(\Pi \otimes I_p\right)\begin{bmatrix} y(t) \\ u(t) \end{bmatrix} dt \geq 0,
\end{align} 
for all $u\in \mathcal{L}_2$ and $y \in \mathfrak{R}u$ if and only if $\textup{SG}(\mathfrak{R}) \subset \mathcal{S}(\Pi)$, in which
\begin{align} 
	\mathcal{S}(\Pi) = \left\{ z \in\mathbb{C} \left|  \begin{bmatrix}
		z \\ 1
	\end{bmatrix}^* \Pi \begin{bmatrix}
		z \\ 1
	\end{bmatrix} \geq 0 \right.  \right\}.\label{eq:S_PI_canonical}
\end{align}
\hfill $\square$
\end{lem}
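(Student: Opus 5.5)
The plan is to reduce both directions to a single identity. For any pair $u\neq 0$, $y\in\mathfrak{R}u$ (both in $\mathcal{L}_2$ by Assumption~\ref{ass:stable_state}), the IQC integral should equal $\|u\|^2$ times the quadratic form of $\Pi$ evaluated at a point of $z(u,\mathfrak{R})$.

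Write $\Pi=\begin{bmatrix}\pi_{11}&\pi_{12}\\ \pi_{12}&\pi_{22}\end{bmatrix}$. Expanding the Kronecker structure gives
\begin{align*}
\int_0^\infty \begin{bmatrix} y \\ u \end{bmatrix}^\top (\Pi\otimes I_p)\begin{bmatrix} y \\ u \end{bmatrix}dt = \pi_{11}\|y\|^2+2\pi_{12}\langle u,y\rangle+\pi_{22}\|u\|^2 .
\end{align*}
For $z=\rho e^{\pm j\theta}$ with $\rho=\|y\|/\|u\|$ and $\cos\theta=\langle u,y\rangle/(\|u\|\|y\|)$, we have $|z|^2=\rho^2$ and $\mathrm{Re}\,z=\rho\cos\theta$. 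Hence
\begin{align*}
\begin{bmatrix} z\\1\end{bmatrix}^*\Pi\begin{bmatrix} z\\1\end{bmatrix}=\pi_{11}|z|^2+2\pi_{12}\,\mathrm{Re}\,z+\pi_{22}
=\frac{\pi_{11}\|y\|^2+2\pi_{12}\langle u,y\rangle+\pi_{22}\|u\|^2}{\|u\|^2}.
\end{align*}
This holds for both signs $\pm$, since only $\mathrm{Re}\,z$ and $|z|$ enter. It also holds in the degenerate case $y=0$: there $z=0$ and $\langle u,y\rangle=0$, consistent with the convention $\theta=0$.

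The identity yields both directions. Suppose the IQC holds for all $u\in\mathcal{L}_2$ and $y\in\mathfrak{R}u$. Then every point of $\textup{SG}(\mathfrak{R})$ arises from some $u\neq 0$, and the identity gives a nonnegative quadratic form, so the point lies in $\mathcal{S}(\Pi)$. Conversely, suppose $\textup{SG}(\mathfrak{R})\subset\mathcal{S}(\Pi)$. For $u\neq0$, multiplying the identity by $\|u\|^2>0$ gives the IQC. For $u=0$, the zero initial condition together with linearity of flows and resets makes $x\equiv 0$, and hence $y=0$, a solution. To cover any other solutions with $u=0$, I would argue as follows: $M$-based jumps from $x=0$ never occur, since $0\in\mathcal{F}$, and $\dot x=Ax$ with $x(0)=0$ has only the trivial solution. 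So $y=0$ and the integral vanishes.

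The main obstacle is bookkeeping rather than depth. The points to handle carefully are that $\mathcal{L}_2$ membership makes every term finite, that multivaluedness of $\mathfrak{R}$ is covered by quantifying over all $y\in\mathfrak{R}u$ on both sides, and the $u=0$ and $y=0$ edge cases. Beyond that, the lemma is simply the scalar identity above.
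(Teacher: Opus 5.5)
Your proof is correct, and it is essentially the standard argument: the IQC integral equals $\|u\|^2$ times the quadratic form of $\Pi$ evaluated at $z=\rho(u,y)e^{\pm j\theta(u,y)}$, which is what underlies this lemma (the paper does not prove it itself but recalls it from \cite{groot_dissipativity_2025}). Your explicit handling of the edge cases fills in details the paper leaves implicit: $y=0$ via the convention $\theta=0$, and $u=0$ via $0\in\mathcal{F}$ together with uniqueness for $\dot x=Ax$, $x(0)=0$, which gives $\mathfrak{R}0=\{0\}$.
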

Let the set $\mathbf{\Pi}(\mathfrak{R})$ be given by
$$\mathbf{\Pi}(\mathfrak{R}):=\{\Pi\in\mathbb{S}^2 \mid  \text{ \eqref{eq:Supply_inequality} holds for all $u\in\mathcal{L}_2,y\in \mathfrak{R} u$} \}$$ and let  $\tilde{\mathbf{\Pi}} \subset \mathbf{\Pi}(\mathfrak{R})$. Then it follows directly from Lemma \ref{lem:connection_to_SG} that
\begin{align} \label{eq:defn_K}
\textup{SG}(\mathfrak{R})\subset \bigcap_{\Pi\in\tilde{\mathbf{\Pi}}}\mathcal{S}(\Pi)=:\mathcal{S}_{\textup{SG}}.
\end{align} 

The parameterization of $\Pi$ as
\begin{align} \label{eq:pi_parameterization}
\Pi(\sigma,\lambda_c,r) = \sigma\begin{bmatrix} 1 & -\lambda_c\\-\lambda_c & \lambda_c^2 -r^2  \end{bmatrix}  
\end{align} 
where $\sigma\in\{-1,+1\}$, $\lambda_c\in\mathbb{R}$ and $r\in\mathbb{R}_{\geq0}$, is of specific interest, as this leads to $\mathcal{S}(\Pi(-1,\lambda_c,r))$ and $\mathcal{S}(\Pi(+1,\lambda_c,r))$ representing the interior and exterior of a disc centered on the real line at $\lambda_c$ and with radius $r$, respectively, see \cite{groot_dissipativity_2025} for more details. 

\subsection{Problem Statement}

 In this paper, we are interested in constructing tight over-bounds of $\textup{SG}(\mathfrak{R})$, with $\mathfrak{R}$ a reset system as in \eqref{eq:canonical_reset_system}. Specifically, we are interested in the following aspects:
\begin{enumerate}
    \item Obtaining tight over-bounds of $\textup{SG}(\mathfrak{R})$ by verifying quadratic dissipativity via flexible storage functions; 
    \item Characterization of the smallest achievable over-bound via quadratic dissipativity.
\end{enumerate}

For 1) we exploit the class of PWQ storage functions to make significant refinements  compared to the use of quadratic storage functions as in, e.g., \cite{groot_dissipativity_2025} (as we demonstrate in an example).

For 2) we use a specific sampling technique in combination with the hyperbolic convex hull to construct the \textit{minimal} achievable over-bound via quadratic dissipativity.

We will start with item 1) in Section 3 and subsequently develop item 2) in Section 4.

\section{Over-bounding scaled graphs}
\subsection{Piecewise Quadratic Storage Functions}
A standard method to verify the IQC in \eqref{eq:Supply_inequality} for reset systems is via the use of PWQ storage functions of the form $W(x) = x^\top P_i x$ with $P_i\in\mathbb{S}^{n}$ and $i\in\{1,\dots,N\}$. For this purpose, the flow and jump sets are partitioned, and a $P_i$ is associated with each partition.

In this paper, we consider a polytopic partitioning of the jump and flow sets denoted $\mathcal{X}:=\bigcup_{i=1}^N\left\{\mathbb{X}_i\right\}$, with $N\in\mathbb{N}$. Each partition is described by
\begin{align*} 
\mathbb{X}_i :=\{x\in\mathbb{R}^n\mid E_ix\geq0\} \; \text{for}\; i\in\{1,\dots,N\},\; E_i\in\mathbb{R}^{n\times n}.
\end{align*} 
We define $\mathcal{M}$ to be the set of indices corresponding to the partitions of the flow set such that $\bigcup_{i\in\mathcal{M}} \mathbb{X}_i  =\mathcal{F}$. Similarly, we define $\mathcal{N}$ to be the set of indices corresponding to the partitions of the jump set, such that $\bigcup_{j\in\mathcal{N}} \mathbb{X}_j =\mathcal{J}$. Note that by construction $\mathcal{M}\cup\mathcal{N}=\{1,\dots,N\}$.

To enforce continuity of the storage function, we employ the parameterization $P_i = F_i^\top \Phi F_i$,  where $\Phi\in\mathbb{R}^{N\times N}$, and $F_i\in\mathbb{R}^{N\times n}$ has the property that 
\begin{align*} 
F_kx = F_lx \quad \text{if}\; x\in\mathbb{X}_k\cap \mathbb{X}_l \quad \text{for all } k,l\in\mathcal{M}\cup\mathcal{N},\;k\neq l.
\end{align*}
The matrices $F_i$ are referred to as continuity matrices. In general, the continuity matrices are not unique, and their construction depends on the type of polytopic partition used \citep{johanssonComputationPiecewiseQuadratic1998a}. See also \cite{erlandsen_continuity_2026}.

\subsection{Main Result}
Next, we present our main result for the over-bound of $\textup{SG}(\mathfrak{R})$. 

\begin{thm}\label{thm:main_reset_PWQ_LMIs}
Consider a reset system $\mathfrak{R}:\mathcal{L}_2\rightrightarrows\mathcal{L}_2$ as in \eqref{eq:canonical_reset_system}. Let a partitioning $\mathcal{X}$ and the corresponding continuity matrices $E_k, F_k$ with $k\in \mathcal{M}\cup \mathcal{N}$ be given. If, for all $i\in\mathcal{M}$ and all $j \in \mathcal{N}$, there exist $\Pi \in\mathbb{S}^{2}$, $\Phi \in \mathbb{S}^{N}$, $U_{1,i},U_{2,i},U_{3,j},U_{4,j}\in\mathbb{S}_{\geq0}^{n}$ such that the following LMIs are satisfied
    \begin{align} 
    \begin{bmatrix}
			A^\top\! P_i + P_iA & P_iB\\B^\top P_i&0
		\end{bmatrix} \!-\!\Theta(\Pi) \!+\! \text{diag}(E_i^\top U_{1,i}E_i,0) & \!\preceq 0, \label{eq:lmi1}\\
    R^\top P_{j}R-P_{j} + E_{j}^\top U_{3,j}E_{j} &\preceq 0, \label{eq:lmi2}\\
    R^\top P_{i}R-P_{j} + E_{j}^\top U_{4,j}E_{j}+R^\top E_i^\top U_{2,i}E_iR & \!\preceq 0, \label{eq:lmi3}
    \end{align} 
    with
    \begin{align} 
        \Theta(\Pi) &= \begin{bmatrix}
		C & D \\ 0 & I
	\end{bmatrix}^\top(\Pi\otimes I_n)\begin{bmatrix}
		C & D \\ 0 & I
	\end{bmatrix},\\
    P_i &= F_i^\top \Phi F_i,\label{eq:cont_eq_1}\\
    P_{j} &= F_{j}^\top \Phi F_{j},\label{eq:cont_eq_2}
    \end{align} 
    then it holds that
    \begin{align} \label{eq:thm_inclusion}
\textup{SG}(\mathfrak{R})\subset \mathcal{S}(\Pi).
\end{align} 
\hfill $\square$
\end{thm}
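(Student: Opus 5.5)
The strategy is to show that the LMIs \eqref{eq:lmi1}--\eqref{eq:lmi3} make the piecewise quadratic function $W(x)=x^\top P_k x$, for $x\in\mathbb{X}_k$, a storage function for the supply rate $s(y,u)=\begin{bmatrix} y\\u\end{bmatrix}^\top(\Pi\otimes I_p)\begin{bmatrix} y\\u\end{bmatrix}$. Integrating the resulting dissipation inequality and using Assumption~\ref{ass:stable_state} gives the IQC \eqref{eq:Supply_inequality}. Lemma~\ref{lem:connection_to_SG} then turns the IQC into the inclusion \eqref{eq:thm_inclusion}.

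\textbf{Well-definedness of $W$.} Because $P_k=F_k^\top\Phi F_k$ and $F_kx=F_lx$ on $\mathbb{X}_k\cap\mathbb{X}_l$, the value of $W$ does not depend on which cell is used at a boundary point. Hence $W$ is continuous along flowing trajectories.

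\textbf{Flow.} Let $x(t)\in\mathbb{X}_i$ with $i\in\mathcal{M}$. Multiply \eqref{eq:lmi1} on both sides by $\begin{bmatrix}x^\top & u^\top\end{bmatrix}$ and its transpose. Since $U_{1,i}$ has nonnegative entries and $E_ix\ge 0$, the term $x^\top E_i^\top U_{1,i}E_ix$ is nonnegative; this is the S-procedure relaxation. By the definition of $\Theta(\Pi)$, the quadratic form $\begin{bmatrix}x\\u\end{bmatrix}^\top\Theta(\Pi)\begin{bmatrix}x\\u\end{bmatrix}$ equals $s(y,u)$ with $y=Cx+Du$. Together these give
\begin{align*}
\tfrac{d}{dt}W(x(t))\le s(y(t),u(t))
\end{align*}
for almost all $t$ in each flow interval. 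Points where $W$ is not differentiable, namely switches between cells, form a null set along an absolutely continuous trajectory, and $W$ is continuous there, so the inequality integrates over each flow interval.

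\textbf{Jumps.} Let $x\in\mathbb{X}_j$ with $j\in\mathcal{N}$. There are two cases for $x^+=Rx$.
\begin{itemize}
\item If $Rx$ lies in a jump cell, \eqref{eq:lmi2} together with $E_jx\ge0$ gives $W(Rx)\le W(x)$. Here I read \eqref{eq:lmi2} as the case where the post-jump cell is the same $j$, or one with matching $P$.
\item If $Rx$ lies in a flow cell $\mathbb{X}_i$, then $E_iRx\ge0$ and $E_jx\ge0$. Applying the S-procedure to \eqref{eq:lmi3} gives $x^\top R^\top P_iRx\le x^\top P_jx$, that is, $W(x^+)\le W(x)$.
\end{itemize}
In both cases $W$ does not increase at a reset.

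\textbf{Integration.} By Assumption~\ref{ass:stable_state}a), resets do not accumulate. Summing the flow inequalities and the jump inequalities over $[0,T]$ therefore gives
\begin{align*}
W(x(T))-W(x(0))\le \int_0^T s\,dt.
\end{align*}
We have $x(0)=0$, so $W(x(0))=0$. By Assumption~\ref{ass:stable_state}b), $x(T)\to0$, so $W(x(T))\to0$ because $W$ is bounded by $\max_k\|P_k\|\,\|x\|^2$. Since $u,y\in\mathcal{L}_2$, the integral converges, and letting $T\to\infty$ gives \eqref{eq:Supply_inequality}. Lemma~\ref{lem:connection_to_SG} then yields $\textup{SG}(\mathfrak{R})\subset\mathcal{S}(\Pi)$.

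\textbf{Main obstacle.} No sign condition on $W$ is needed, because $W(x(0))=0$ and $W(x(T))\to0$. The delicate step is the jump case. One must argue that \eqref{eq:lmi2} and \eqref{eq:lmi3} cover every possible post-reset cell, which requires interpreting the indices so that all pairs (source jump cell, target cell) are handled. A second technical point is justifying that $\tfrac{d}{dt}W$ exists almost everywhere and integrates correctly across cell switches. I would handle this using absolute continuity of $x$ on flow intervals and continuity of $W$.
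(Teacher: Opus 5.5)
Your proposal is correct and takes essentially the route the paper defers to (the PWQ storage-function proofs of Aangenent et al.\ and van den Eijnden et al.): continuity of $W$ via $P_k=F_k^\top\Phi F_k$, the dissipation inequality during flow and non-increase at resets via the S-procedure, integration using $x(0)=0$ and $x(t)\to0$, and then Lemma~\ref{lem:connection_to_SG}. The case you flag, a reset from $\mathbb{X}_j$ into a different jump cell, is a looseness of the statement itself rather than of your argument, and it is harmless in the paper's examples, where $R=0$ sends every state to the origin (which lies in a flow cell). One correction for the flow step: boundary times need not be null, since a trajectory can slide along a cell boundary, so the right justification is that $W\circ x$ is absolutely continuous ($W$ is locally Lipschitz) and, at almost every $t$ with $x(t)\in\mathbb{X}_i$, its derivative equals that of $x^\top P_i x$.
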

The proof follows along similar lines as the proofs in \cite{aangenent_performance_2010} and \cite{van_den_eijnden_scaled_2024}.
\begin{rem}
   The LMI in \eqref{eq:lmi1} encodes the dissipativity inequality during flow, where we have used the S-procedure relaxation. The LMIs in \eqref{eq:lmi2} and \eqref{eq:lmi3} ensure that the storage function decreases during jumps, where the S-procedure relaxation is also used. The equalities in \eqref{eq:cont_eq_1} and \eqref{eq:cont_eq_2} ensure continuity of the storage function. \hfill $\square$
\end{rem}
\begin{rem}
        Theorem \ref{thm:main_reset_PWQ_LMIs} can be extended to apply to hard SGs, introduced in \cite{chen2025softhardscaledrelative}, which are useful for working with unbounded systems such as integrators. Following the reasoning of Section 6 of \cite{groot_dissipativity_2025}, constraints $P_k\succeq 0 $ for all $k\in\mathcal{M}\cup\mathcal{N}$ should be added to the LMIs in \eqref{eq:lmi1}-\eqref{eq:lmi3}, so that the inclusion in \eqref{eq:thm_inclusion} acts on the hard SG. \hfill $\square$
\end{rem}

Using Theorem \ref{thm:main_reset_PWQ_LMIs} we can construct an over-bounding of $\textup{SG}(\mathfrak{R})$. To this end, we consider the parameterization of $\Pi$ in \eqref{eq:pi_parameterization} and collect the left hand side of the LMI in (\ref{eq:lmi1}),\eqref{eq:lmi2} and (\ref{eq:lmi3}) in the LMI $L(\sigma,\!\lambda_c,\!r,\!\Phi,\!U_{1,i},\!U_{2,i},\!U_{3,j},\!U_{4,j})$. Furthermore, we define two sets $\Lambda_i$ and $\Lambda_e$, which contain the center points for the circular regions given by $\mathcal{S}(\Pi)$. The corresponding radii are then found by solving the following optimization problem \textbf{Prob}.

\textbf{Prob:} For each $(\sigma,\lambda_c) \in \{-1\}\times\Lambda_i \cup \{1\}\times \Lambda_e$, 
\begin{equation*}
\begin{aligned}
&\underset{r,\Phi, U_{1,i},U_{2,i},U_{3,j},U_{4,j}}{\makebox[0pt][r]{\text{minimize}}}        & & \sigma r^2\\
&\text{subject to} & &L(\sigma,\lambda_c,r,\Phi,U_{1,i},U_{2,i},U_{3,j},U_{4,j})\preceq 0, \\
& & & \Phi\in\mathbb{S}^N, U_{1,i},U_{2,i},U_{3,j},U_{4,j} \in \mathbb{S}_{\geq0}^n,\\ &&& i\in\mathcal{M}, j\in\mathcal{N},r>0.
\end{aligned}
\end{equation*}

 All $\Pi$ matrices corresponding to the solutions of the optimization problem \textbf{Prob} are collected in the set $\tilde{\Pi}$.  The over-bound is then constructed by invoking Lemma \ref{lem:connection_to_SG}.

\subsection{Examples}
We illustrate the effectiveness of Theorem \ref{thm:main_reset_PWQ_LMIs} via examples of SISO and MIMO reset systems. 

\begin{exmp}[SISO]
Consider the SISO reset system denoted $\mathfrak{R}_1$, described by \eqref{eq:canonical_reset_system} with the data
\begin{align}\label{eq:R1}
		\left[\begin{array}{c|c }
			A&B   \\
			\hline 
			C&D		
		\end{array}\right] = 
		\left[\begin{array}{c c|c }
			 -1 & 0 & 1   \\
			 1 & -1 & 0\\
			\hline 
			0 & 1 & 0
		\end{array}\right],\;\begin{aligned}
		    R&=\textup{diag}(0,0),\\ M&=\textup{diag}(0.9^2,-1).
		\end{aligned}
\end{align}

As a partition strategy, we consider polytopic conical partitions of the jump and flow sets. The flow set is subdivided into $N-2$ polytopic cones, with equidistant angles. The jump set can be described by two polytopic cones and is not subdivided further (see also \cite{aangenent_performance_2010}).

Having constructed the cell bounding matrices and the continuity matrices, we solve the optimization problem \textbf{Prob} with $N=40$, $\Lambda_i=\{-1+0.05k \mid k = 0,1, \dots, 80\}$ and $\Lambda_e=\{-1+0.25k \mid k = 0,1, \dots, 80\}$ and construct the over-bound using Lemma \ref{lem:connection_to_SG}. The result is shown by the blue region in Fig. \ref{fig:reset_comparison}.

 Fig. \ref{fig:reset_comparison} shows that the over-bound of $\textup{SG}(\mathfrak{R}_1)$ constructed using Theorem \ref{thm:main_reset_PWQ_LMIs} is significantly tighter than the over-bound found using common quadratic storage functions. This shows the improvements that more flexible PWQ storage functions offer over common quadratic ones.

\begin{figure}[t]
    \centering
    \includegraphics[width=0.38\linewidth]{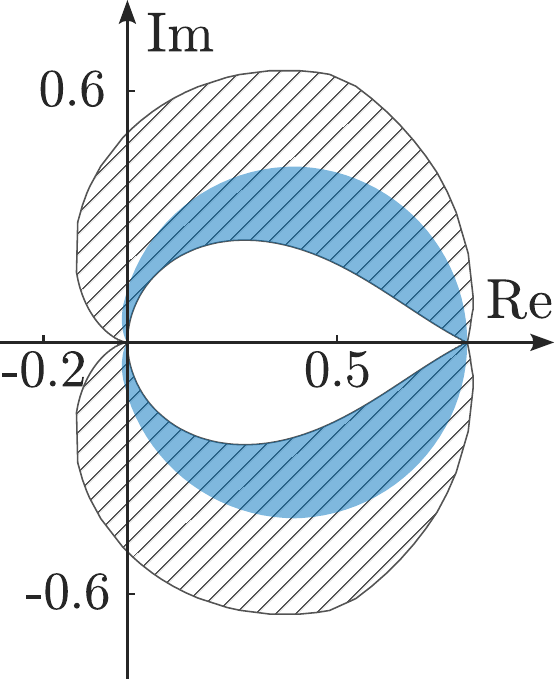}
    \caption{Over-bound of $\textup{SG}(\mathfrak{R}_1)$ using PWQ storage functions 
 (blue), and using common quadratic storage functions
 (hatched). The over-bound using PWQ storage functions is significantly smaller and thus less conservative, compared to the bound obtained using common quadratic storage functions.}
    \label{fig:reset_comparison}
\end{figure}
    
\end{exmp}

\begin{exmp}[MIMO]
Consider a MIMO reset system described by (\ref{eq:canonical_reset_system}) with data
\begin{align*}
		\left[\begin{array}{c|c }
			A&B   \\
			\hline 
			C&D		
		\end{array}\right] \!=\! 
		\left[\begin{array}{c c|c c }
			 -1 & 0 & 1 & 0  \\
			 1 & -1 & 0 & 1\\
			\hline 
			1 & 0 & 0 & 0\\
            0 & 1 & 0 & 0
		\end{array}\right]\;\begin{aligned}
		    R&=\textup{diag}(0,0),\\ M&=\textup{diag}(0.9^2,-1).
		\end{aligned}
\end{align*}

which is denoted $\mathfrak{R}_2$. We partition the state-space in the same way as for $\mathfrak{R}_1$, with $N=40$, and $\Lambda_i$ $\Lambda_e$ as before. The resulting over-bound of $\textup{SG}(\mathfrak{R}_2)$ using Theorem \ref{thm:main_reset_PWQ_LMIs}, as well as the over-bound based on common quadratic storage functions, are shown in Fig. \ref{fig:reset_MIMO}.

\begin{figure}[b]
\centering
    \includegraphics[width=0.35\linewidth]{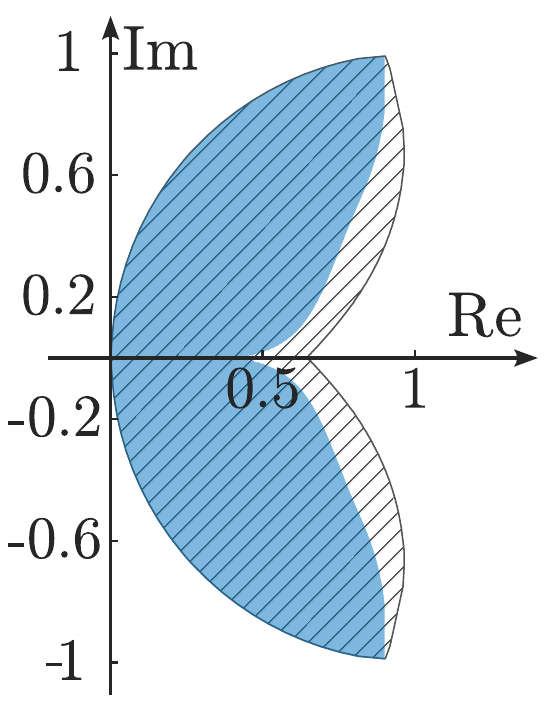}
\caption{ Over-bound of $\textup{SG}(\mathfrak{R}_2)$ using PWQ storage functions 
 (blue), and using common quadratic storage functions
 (hatched).}
    \label{fig:reset_MIMO}
\end{figure} 

Fig. \ref{fig:reset_MIMO} shows that the over approximation based on a PWQ storage function is smaller than the respective over-bound based on a common quadratic storage function.    
\end{exmp}

\section{Minimal Achievable Over-Bounds}
In the previous section, we have shown that using PWQ storage functions can significantly tighten the over-bound of reset system SGs. The question remains whether the conservatism  could be reduced further by showing quadratic dissipativity using different classes of storage functions.

 In the first part of this section we provide some useful machinery from hyperbolic geometry, which will play a key role in our second main result. In the second part, we state our second main result in Theorem \ref{thm:samples}, which provides a minimal achievable over-bound of SGs using quadratic dissipativity. In the third part, we apply this result to the example system $\mathfrak{R}_1$ To illustrate the usefulness of the result.

 \subsection{Hyperbolic Geometry} \label{sec:hyperbolic_geometry}
 The geodesic between two points $z_1,z_2\in\mathbb{C}_{+}$ under the Poincaré metric is denoted $\text{ARC}_{\text{min}}(z_1,z_2)$. This is the section of the circle in $\mathbb{C}_+$, centered on the real axis and having endpoints $z_1$ and $z_2$. We define hyperbolic convexity (h-convexity) and the h-convex hull (h-hull) as follows.
\begin{defn}\textnormal{\cite[Definition 6]{chaffey_graphical_2023}} A set $\mathcal{C}\subset\mathbb{C}_{+}$ is said to be h-convex if
	\begin{align} 
		z_1,z_2\in \mathcal{C} \Rightarrow  \textup{ARC}_{\textup{min}}(z_1,z_2)\in \mathcal{C}.
		\end{align} 
        The h-convex hull of $\mathcal{C}$, denoted by h-hull($\mathcal{C}$), is the smallest h-convex set containing $\mathcal{C}$. \hfill $\square$
\end{defn}
We collect those subsets of $\mathbb{C}$ that are symmetric around the real axis in the set
 \begin{align} 
 \mathbb{C}\mathbb{S} :=\{ \mathcal{C}\subset\mathbb{C} \mid \mathcal{C}\cap\mathbb{C}_+ = \textup{conj}(\mathcal{C}\cap\mathbb{C}_-) \}
\end{align} 
Next, we define the h-convex hull for sets $\mathcal{C}\in\mathbb{C}\mathbb{S}$ as 
\begin{align} 
	\textup{h-hull}(\mathcal{C})\!\! :=\! \textup{h-hull}(\mathcal{C}\cap\mathbb{C}_+) \!\cup \textup{conj}(\textup{h-hull}(\mathcal{C}\cap\mathbb{C}_+)).
\end{align} 

Examples of $\textup{h-hull}(\mathcal{C})$ are shown in Fig. \ref{fig:h-hull-examples} for both cases $\mathcal{C}\subset\mathbb{C}_+$ and $\mathcal{C}\in\mathbb{C}\mathbb{S}$. Note that, by definition, the h-hull preserves set inclusion, that is, for any sets $\mathcal{X}, \mathcal{Y} \subset \mathbb{C}\mathbb{S}$, if $\mathcal{X} \subset \mathcal{Y}$, then
$\operatorname{h\text{-}hull}(\mathcal{X}) \subset \operatorname{h\text{-}hull}(\mathcal{Y})$.

\subsection{Minimal achievable over-bounds}
Consider the subset of inputs $\mathcal{U}\subset\mathcal{L}_2$. We define a set of samples of $\textup{SG}(\mathfrak{R})$ as
\begin{align} 
	\mathcal{Z}(\mathfrak{R},\mathcal{U}) := \{ z(u,\mathfrak{R}) \mid u\in\mathcal{U} \},
\end{align}
with $z(u,\mathfrak{R})$ in \eqref{eq:zu}, such that, by definition, $ \mathcal{Z}(\mathfrak{R},\mathcal{U})\subset \textup{SG}(\mathfrak{R}) $. We now state our next result.
\begin{thm}\label{thm:samples}
	Given a reset system $\mathfrak{R}:\mathcal{L}_2\rightrightarrows\mathcal{L}_2$, an over-bound of $\textup{SG}(\mathfrak{R})$, denoted $\mathcal{S}_{\textup{SG}}$ from  \eqref{eq:defn_K}, and a set of samples $\mathcal{Z}(\mathfrak{R},\mathcal{U})$. Then for any $\mathcal{U}\subset \mathcal{L}_2$
	\begin{align} 
		\textup{h-hull}(\mathcal{Z}(\mathfrak{R},\mathcal{U})) \subset \mathcal{S}_{\textup{SG}}. \label{eq:lower_bound_approx}
\end{align} 
\end{thm}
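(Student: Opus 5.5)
The proof reduces the statement to a property of each individual set $\mathcal{S}(\Pi)$. Since $\mathcal{S}_{\textup{SG}}=\bigcap_{\Pi\in\tilde{\mathbf{\Pi}}}\mathcal{S}(\Pi)$ with $\tilde{\mathbf{\Pi}}\subset\mathbf{\Pi}(\mathfrak{R})$, it suffices to show that for every fixed $\Pi\in\tilde{\mathbf{\Pi}}$ we have $\textup{h-hull}(\mathcal{Z}(\mathfrak{R},\mathcal{U}))\subset\mathcal{S}(\Pi)$.

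\emph{Step 1: the samples lie in $\mathcal{S}(\Pi)$.} By Lemma~\ref{lem:connection_to_SG}, $\textup{SG}(\mathfrak{R})\subset\mathcal{S}(\Pi)$. By definition, $\mathcal{Z}(\mathfrak{R},\mathcal{U})\subset\textup{SG}(\mathfrak{R})$. Note that $\mathcal{Z}$ is conjugation-symmetric, since each $z(u,\mathfrak{R})$ contains both $\rho e^{\pm j\theta}$, so $\mathcal{Z}\in\mathbb{C}\mathbb{S}$ and its h-hull is defined.

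\emph{Step 2: $\mathcal{S}(\Pi)$ is conjugation-symmetric and its upper half is h-convex.} With this in hand, monotonicity of the h-hull gives
\[
\textup{h-hull}(\mathcal{Z})\subset\textup{h-hull}(\mathcal{S}(\Pi))=\mathcal{S}(\Pi),
\]
and intersecting over $\Pi$ finishes the proof. Symmetry is immediate because $\Pi$ is real symmetric. Writing $z=a+jb$,
\[
\begin{bmatrix} z\\1\end{bmatrix}^*\Pi\begin{bmatrix} z\\1\end{bmatrix}=\pi_{11}(a^2+b^2)+2\pi_{12}a+\pi_{22},
\]
which depends on $b$ only through $b^2$.

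Step 2 is the main obstacle, namely showing that $\mathcal{S}(\Pi)\cap\mathbb{C}_+$ is h-convex for an arbitrary $\Pi\in\mathbb{S}^2$. I would split into cases on $\pi_{11}$.

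\textbf{Case $\pi_{11}=0$.} The set is a half-plane $\{a: 2\pi_{12}a+\pi_{22}\ge0\}$, possibly all of $\mathbb{C}$ or empty; its boundary is a vertical line, i.e.\ a geodesic.

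\textbf{Case $\pi_{11}\neq0$.} Completing the square shows that the set is the closed interior or exterior of a circle centred on the real axis (possibly degenerate, everything, or empty). This matches the parameterization \eqref{eq:pi_parameterization}.

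In every case the boundary within $\mathbb{C}_+$ is a Poincaré geodesic, so each region is a hyperbolic half-plane, which is h-convex. To argue this concretely I would use that a geodesic arc $\textup{ARC}_{\min}(z_1,z_2)$, which lies on a circle centred on the real axis, intersects another such circle or vertical line in at most one point of $\mathbb{C}_+$, unless the two coincide. Two distinct circles with real centres meet in a conjugate pair of points, so they share at most one point in the open upper half-plane. Hence if both endpoints $z_1,z_2$ lie in the region, the arc cannot leave it: leaving and re-entering would require two crossings of the boundary.

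Degenerate situations need a brief separate remark: points on the real axis, where arcs become vertical segments or points, and the trivial sets $\mathbb{C}$ and $\emptyset$. Finally, if $\tilde{\mathbf{\Pi}}$ is empty, the intersection is interpreted as $\mathbb{C}$ and the claim is trivial.
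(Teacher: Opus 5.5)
Your proof is correct and follows essentially the paper's route. Samples lie in $\textup{SG}(\mathfrak{R})$, which lies in the conjugation-symmetric over-bound; the upper half of the over-bound is h-convex, being an intersection of hyperbolic half-planes; monotonicity of the h-hull then gives the inclusion. The only differences are that you apply this to each $\mathcal{S}(\Pi)$ and intersect afterwards, and that you justify the h-convexity of each $\mathcal{S}(\Pi)\cap\mathbb{C}_+$ (the two-boundary-crossings argument, including the degenerate cases), which the paper only asserts.
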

\begin{pf}
First, note that $\mathcal{S}_{\textup{SG}}$ is symmetric around the real axis. Then, by the definition of $\mathcal{S}_{\textup{SG}}$ in \eqref{eq:defn_K} we get
\begin{align} 
\mathcal{S}_{\textup{SG}} \cap \mathbb{C}_+ = \left( \bigcap_{\Pi\in\tilde{\mathbf{\Pi}}}\mathcal{S}(\Pi) \right) \cap \mathbb{C}_+.
\end{align} 
This shows that $\mathcal{S}_{\textup{SG}}\cap\mathbb{C}_+$ is the intersection of hyperbolic half-planes, therefore $\mathcal{S}_{\textup{SG}}=\textup{h-hull}(\mathcal{S}_{\textup{SG}})$. Next, from the definition of $\mathcal{Z}(\mathfrak{R},\mathcal{U})$ we get $\mathcal{Z}(\mathfrak{R},\mathcal{U})\subset \textup{SG}(\mathfrak{R})$, and from Lemma \ref{lem:connection_to_SG} we get $\textup{SG}(\mathfrak{R})\subset \mathcal{S}_{\textup{SG}}$. Therefore, $\mathcal{Z}(\mathfrak{R},\mathcal{U})\subset\mathcal{S}_{\textup{SG}}$. Taking the hyperbolic convex-hull on both sides gives
\begin{align} 
\textup{h-hull}(\mathcal{Z}(\mathfrak{R},\mathcal{U})) \subset  \textup{h-hull}(\mathcal{S}_{\textup{SG}})=\mathcal{S}_{\textup{SG}}.
\end{align} 
This concludes the proof. \hfill $\square$
\end{pf}
Note that in general $\textup{h-hull}(\mathcal{Z}(\mathfrak{R},\mathcal{U}))\nsubseteq \textup{SG}(\mathfrak{R})$.

\begin{figure}[t]
\centering
    \includegraphics[width=3.3cm]{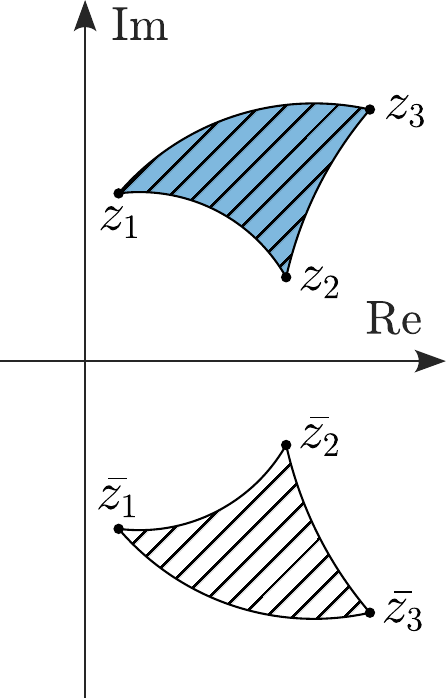}
\caption{ Examples of h-hulls of $\mathcal{C} = \{z_1,z_2,z_3\}\subset\mathbb{C}_+$ (blue)  and $\mathcal{C} =  \{z_1,\bar{z}_1,z_2,\bar{z}_2,z_3,\bar{z}_3\}\in\mathbb{C}\mathbb{S}$ (hatched).}
    \label{fig:h-hull-examples}
\end{figure} 

Theorem \ref{thm:samples} shows that $\textup{h-hull}(\mathcal{Z}(\mathfrak{R},\mathcal{U}))$ acts as a minimal achievable over-bound for $\textup{SG}(\mathfrak{R})$. We formalize this in the following corollary.
\begin{cor}\label{cor:limits}
    If $\textup{h-hull}(\mathcal{Z}(\mathfrak{R},\mathcal{U}))\!=\!\mathcal{S}_{\textup{SG}}$, then $\mathcal{S}_{\textup{SG}}$ is the minimal achievable over-bound of $\textup{SG}(\mathfrak{R})$ using quadratic dissipativity.
\end{cor}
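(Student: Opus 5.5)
The plan is to read ``minimal achievable over-bound using quadratic dissipativity'' as follows. Any over-bound obtained through quadratic dissipativity is a set $\mathcal{S}'_{\textup{SG}}=\bigcap_{\Pi\in\tilde{\mathbf{\Pi}}'}\mathcal{S}(\Pi)$ for some $\tilde{\mathbf{\Pi}}'\subset\mathbf{\Pi}(\mathfrak{R})$, as in \eqref{eq:defn_K}. This holds regardless of the class of storage functions used to certify the IQCs: polynomial, PWQ, or otherwise, including the choice $\tilde{\mathbf{\Pi}}'=\mathbf{\Pi}(\mathfrak{R})$ itself. The claim to prove is then that $\mathcal{S}_{\textup{SG}}\subset\mathcal{S}'_{\textup{SG}}$ for every such $\tilde{\mathbf{\Pi}}'$. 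Hence no quadratic-dissipativity-based certificate can produce a strictly smaller region, and $\mathcal{S}_{\textup{SG}}$ is itself achievable by hypothesis.

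The key step is to apply Theorem \ref{thm:samples} not to $\mathcal{S}_{\textup{SG}}$ but to the arbitrary competitor $\mathcal{S}'_{\textup{SG}}$. The proof of Theorem \ref{thm:samples} uses only two facts: the set is an intersection of regions $\mathcal{S}(\Pi)$ with $\Pi\in\mathbf{\Pi}(\mathfrak{R})$, and the sample set $\mathcal{Z}(\mathfrak{R},\mathcal{U})$ is contained in $\textup{SG}(\mathfrak{R})$. Both remain valid for $\mathcal{S}'_{\textup{SG}}$. Lemma \ref{lem:connection_to_SG} gives $\textup{SG}(\mathfrak{R})\subset\mathcal{S}'_{\textup{SG}}$, and $\mathcal{S}'_{\textup{SG}}\cap\mathbb{C}_+$ is an intersection of hyperbolic half-planes, so $\mathcal{S}'_{\textup{SG}}$ is h-convex. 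Therefore $\textup{h-hull}(\mathcal{Z}(\mathfrak{R},\mathcal{U}))\subset\mathcal{S}'_{\textup{SG}}$. Combining this with the hypothesis $\textup{h-hull}(\mathcal{Z}(\mathfrak{R},\mathcal{U}))=\mathcal{S}_{\textup{SG}}$ yields $\mathcal{S}_{\textup{SG}}\subset\mathcal{S}'_{\textup{SG}}$. This is exactly minimality in the inclusion order.

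The main obstacle is making the h-convexity claim watertight for a general $\Pi\in\mathbb{S}^2$, not only for the disc parameterization \eqref{eq:pi_parameterization}. I would proceed in three steps.

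\textbf{Case $\pi_{11}\neq 0$.} Here $\mathcal{S}(\Pi)$ is the closed interior or exterior of a circle centered on the real axis, possibly degenerate: all of $\mathbb{C}$, empty, or a single point.

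\textbf{Case $\pi_{11}=0$.} Here $\mathcal{S}(\Pi)$ is a vertical half-plane $\{\operatorname{Re} z\gtrless c\}$ or all of $\mathbb{C}$.

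\textbf{h-convexity.} In every case the intersection with $\mathbb{C}_+$ is a closed hyperbolic half-plane in the Poincar\'e upper half-plane model, or one of its trivial cases, so it is h-convex. An arbitrary intersection of h-convex sets is again h-convex, since it contains every geodesic arc between any two of its points.

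Symmetry about the real axis is immediate, because $\mathcal{S}(\Pi)$ is defined through a real quadratic form in $z$ and $\bar z$. Hence the $\mathbb{C}\mathbb{S}$-version of the h-hull, applied to the intersection, returns the set itself. A minor technicality is that points on the real axis belong to both $\mathbb{C}_+$ and $\mathbb{C}_-$. The h-hull definition handles this consistently, because geodesics between real-axis points degenerate to real segments, and these lie in the real disc/half-plane intersection.
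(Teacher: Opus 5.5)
Your proposal is correct and is essentially the paper's argument. The paper's one-line proof likewise applies Theorem~\ref{thm:samples} to an arbitrary quadratic-dissipativity over-bound of the form \eqref{eq:defn_K} and combines it with the hypothesis, and your case analysis for general $\Pi\in\mathbb{S}^2$ only spells out the h-convexity claim the paper asserts in the proof of Theorem~\ref{thm:samples}. One small slip: under the paper's Poincar\'e-metric definition, $\textup{ARC}_{\textup{min}}$ between two real points is the upper semicircle over the segment joining them, not the segment itself. This does not affect your conclusion, because closed discs and closed disc exteriors centered on the real axis, and vertical half-planes, each contain that semicircle whenever they contain both endpoints.
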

\begin{pf}
    The proof follows directly from Theorem \ref{thm:samples} and the fact that $\mathcal{S}_{\textup{SG}}$ is constructed using intersections of circular regions which are found using quadratic dissipativity. \hfill $\square$
\end{pf}
\begin{rem}
Although, we limit ourselves to reset systems in this paper, Theorem \ref{thm:main_reset_PWQ_LMIs} and Corollary \ref{cor:limits} immediately extend to SGs of other classes of systems for which quadratic dissipativity is used to calculate over-bounds, such as piecewise linear systems \citep{groot_dissipativity_2025} and Lur'e systems  \citep{degrootExploitingStructureMIMO2025}. \hfill $\square$
\end{rem}

Since $\mathcal{Z}(\mathfrak{R},\mathcal{U})\subset \mathcal{S}_{\textup{SG}}$, Corollary \ref{cor:limits} can be formulated using $\mathcal{Z}(\mathfrak{R},\mathcal{U})$, without the need for the h-hull. However, Using the h-hull provides significant benefits over using the samples directly. In general, since $\textup{h-hull}(\mathcal{Z}(\mathfrak{R},\mathcal{U}))\nsubseteq\textup{SG}(\mathfrak{R})$,  $\textup{h-hull}(\mathcal{Z}(\mathfrak{R},\mathcal{U}))$ can cover regions where no samples can exist as they are not part of $\textup{SG}(\mathfrak{R})$. Furthermore, $\textup{h-hull}(\mathcal{Z}(\mathfrak{R},\mathcal{U}))$ can cover a large part of $\textup{SG}(\mathfrak{R})$ using only a few samples, so fewer samples are needed to construct a minimal achievable over-bound. The usefulness of Theorem \ref{thm:samples} is illustrated in the example in the next subsection.

\begin{figure}[b]
    \centering
    \includegraphics[width=0.5\linewidth]{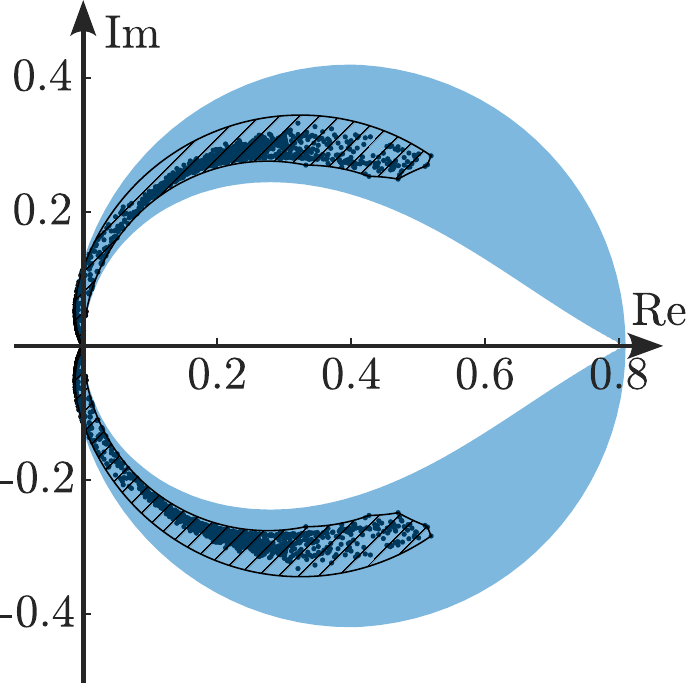}
    \caption{Over-bound $\mathcal{S}_{\textup{SG},1}$ of $\textup{SG}(\mathfrak{R}_1)$ (blue region), $\mathcal{Z}_{\text{ms}}$ (black points), and $\textup{h-hull}(\mathcal{Z}_{\text{ms}})$ (black hatched region). $\mathcal{Z}_{\text{ms}}$ and $\textup{h-hull}(\mathcal{Z}_{\text{ms}})$ partially cover $\mathcal{S}_{\textup{SG},1}$. }
    \label{fig:sampling_multisin_hhul}
\end{figure}

\subsection{Example 1 revisited}
We again consider the example reset system $\mathfrak{R}_1$ in \eqref{eq:R1} and seek samples in order to apply Theorem \ref{thm:samples} and obtain insight into the possible conservatism in the over-bound of $\textup{SG}(\mathfrak{R}_1)$ denoted $\mathcal{S}_{\textup{SG},1}$. To this end, we consider two sets of input signals: 1) multi-sines and 2) switching sigmoids.

Consider the finite set of random multi-sine signals $\mathcal{U}_{ms}$, the resulting samples $\mathcal{Z}_{ms}=\mathcal{Z}(\mathfrak{R}_1,\mathcal{U}_{ms})$, and $\mathcal{S}_{\textup{SG},1}$. We compare $\textup{h-hull}(\mathcal{Z}_{ms})$ with $\mathcal{S}_{\textup{SG},1}$, which are both, together with $\mathcal{Z}_{ms}$, shown in Fig. \ref{fig:sampling_multisin_hhul}. It can be seen that there are significant regions of $\mathcal{S}_{\textup{SG},1}$ that are not covered by $\textup{h-hull}(\mathcal{Z}_{ms})$. This can indicate two scenarios: 1) there is significant conservatism in the over-bound and extending the class of storage functions considered could potentially improve the over-bound; 2) considering a wider class of inputs can increase the region covered by $\textup{h-hull}(\mathcal{Z}(\mathfrak{R},\mathcal{U}))$. We investigate the second case.

We are interested in sampling the SG around the point $z=0.81$, as this is the region where samples are missing in Fig. \ref{fig:sampling_multisin_hhul}. For any $z\in\textup{SG}(\mathfrak{R}_1)\cap\mathbb{R}$ it must hold that $\langle u,y\rangle = \|u\|\|y\|$, i.e. the phase component of the SG is zero for these inputs. This inequality holds if $\mathfrak{R}_1$ behaves as a gain for an input or set of inputs, i.e., $y=\alpha u$, with $\alpha\in\mathbb{R}$. 

In order to achieve ``gain-like" behavior, we consider discontinuous input signals. Specifically, signals that reset at the same time as the states are reset, such that the system only needs to behave as a gain during flow. To describe the resetting of the input signal, we construct a timer $\tau$, which is given by the solution to
\begin{align} 
\begin{cases}
    \dot{\tau}(t)= 1 &\text{if}\; \xi(t) \in \mathcal{F},\\
    \tau(t^+) = 0 & \text{if}\; \xi(t) \in \mathcal{J}.
\end{cases} 
\end{align}  
The resetting input signal can then be written as $u(\tau(t))$. 

\begin{rem}
    Constructing the input signal in this manner creates an indirect feedback loop (in the sense that the input depends on the state). The input signal can be regarded as a signal in $\mathcal{L}_2$, and thus results, when used for sampling, in a point in $\textup{SG}(\mathfrak{R}_1)$. From another point of view, consider an input signal $\bar{u}(t) = u(t)$ for all $t\in \mathbb{R}_{\geq0}$, but which does not directly depend on jumps of the state of $\mathfrak{R}_1$. Then, $\mathfrak{R}_1\bar u = \mathfrak{R}_1 u$ and thus the sampled point will be identical. \hfill $\square$
\end{rem}

For the resetting input signal class, we consider exponential decaying sigmoids, given by
\begin{align} 
u(t) = \frac{e^{-\mu t}}{1+e^{-at+\nu}},
\end{align} 
with $\mu,a,\nu\in\mathbb{R}_{>0}$, such that $u\in\mathcal{L}_2$. The intuition for this class of inputs is as follows. Sigmoids are monotonic and have a dominant low-frequency content. Due to the low-pass behavior of the flow dynamics of $\mathfrak{R}_1$, this results in gain-like behavior, especially for slower rising sigmoids.

Using the sigmoid function and the reset mechanism, we define the set of inputs
\begin{multline*} 
\mathcal{U}_{\text{sig}}:=\left\{u(\tau(t)) \mid \right. \\ \left. t\in\mathbb{R}_{\geq0} , \mu=0.001,a\in [0.05,10] \nu\in[0.1,100]  \right\}.
\end{multline*} 

The resulting sample points $\mathcal{Z}_{\text{sig}}:=\mathcal{Z}(\mathfrak{R}_1,\mathcal{U}_{\text{sig}})$ and $\mathcal{S}_{\textup{SG},1}$ are shown in Fig. \ref{fig:reset_resampled}. It can be seen that $\mathcal{Z}_{\text{sig}}$ fills a significant part of $\mathcal{S}_{\textup{SG},1}$ around $z=0.81$, showing that this region is indeed part of $\textup{SG}(\mathfrak{R}_1)$.

In order to investigate the conservatism of $\mathcal{S}_{\textup{SG},1}$, we define $\mathcal{Z}_{1}:=\mathcal{Z}_{\text{sig}}\cup \mathcal{Z}_{\text{ms}}$. Using Theorem \ref{thm:samples} we find that $\textup{h-hull}(\mathcal{Z}_1)\subset\mathcal{S}_{\textup{SG},1}$. In Fig. \ref{fig:reset_resampled} $\textup{h-hull}(\mathcal{Z}_1)$ is shown as the black hatched region. Fig.\ref{fig:reset_resampled} shows that $\textup{h-hull}(\mathcal{Z}_1)$ covers a large part of $\mathcal{S}_{\textup{SG},1}$ including the outer boundary of $\mathcal{S}_{\textup{SG},1}$.  The conclusion from this result is that the outer boundary of $\textup{SG}(\mathfrak{R}_1)$ cannot be more tightly over-bounded than the outer boundary of $\mathcal{S}_{\textup{SG},1}$, by using other storage functions to verify quadratic dissipativity.

Note that in order to reach the same conclusion using sample points directly (i.e., without the h-hull), sample points must be found that lie along the outer boundary of $\mathcal{S}_{\textup{SG},1}$. However, such sample points might not exist as $\textup{h-hull}(\mathcal{Z}_1)\nsubseteq \textup{SG}(\mathfrak{R}_1)$.  This illustrates the benefit of using $\textup{h-hull}(\mathcal{Z}_1)$ over $\mathcal{Z}_1$ directly.

\begin{figure}[b]
    \centering
    \includegraphics[width=8.4cm]{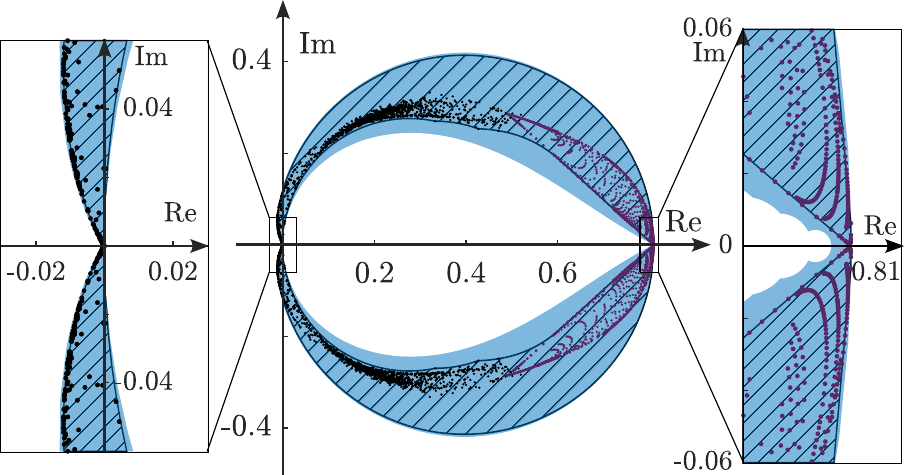}
    \caption{ Over approximation $\mathcal{S}_{\textup{SG},1}$ of $\textup{SG}(\mathfrak{R}_1)$ (blue region), $\mathcal{Z}_{\text{ms}}$ (black points), $\mathcal{Z}_{\text{sig}}$ (purple points) and $\textup{h-hull}(\mathcal{Z}_1)$ (black hatched region). It can be seen that $\textup{h-hull}(\mathcal{Z}_1)$ and $\mathcal{S}_{\textup{SG},1}$ significantly overlap and that the outer boundary of $\textup{h-hull}(\mathcal{Z}_1)$ is close to the outer boundary of $\mathcal{S}_{\textup{SG},1}$. Therefore,  the outer boundary of $\textup{SG}(\mathfrak{R}_1)$ cannot be more tightly bounded by considering other classes of storage functions. }
    \label{fig:reset_resampled}
\end{figure}

\section{Conclusions}
In this paper, we have presented a method for bounding scaled graphs of reset systems. Our approach builds on connections between LMIs, IQCs, dissipativity, and SGs using piecewise quadratic storage functions. The method presented is applied to example reset systems and is able to achieve  significantly tighter bounds.  Furthermore, we have presented a method to investigate the conservatism in the over-bound by exploiting inherent limitations of the quadratic dissipativity framework.  Using these limitations, we can construct a minimal achievable over-bound based on samples.  In an example, we show that the current over-bound is close to the best that can be obtained using the framework based on quadratic dissipativity. The tightness of the current over-bound is still unknown and is left for future work.


\small
\bibliography{Sources}             

@ARTICLE{HosseinFrequency,
  author={van Eijk, Luke F. and Kostić, Dragan and Hassan HosseinNia, S.},
  journal={IEEE Control Systems Letters}, 
  title={Frequency Response Analysis of General Zero-Crossing Reset Control Systems}, 
  year={2025},
  volume={9},
  number={},
  pages={1105-1110},
}

@misc{krebbekx_graphical_2025,
	title = {Graphical {Analysis} of {Nonlinear} {Multivariable} {Feedback} {Systems}},
	publisher = {arXiv},
	author = {Krebbekx, Julius P. J. and Tóth, Roland and Das, Amritam},
	month = jul,
	year = {2025},
	note = {arXiv:2507.16513 [eess]},
}

@misc{krebbekx_reset_2025,
	title = {Reset {Controller} {Analysis} and {Design} for {Unstable} {Linear} {Plants} using {Scaled} {Relative} {Graphs}},
	publisher = {arXiv},
	author = {Krebbekx, Julius P. J. and Tóth, Roland and Das, Amritam},
	month = jun,
	year = {2025},
	note = {arXiv:2506.13518 [eess]},
}

@article{beker_plant_2001,
	title = {Plant with integrator: an example of reset control overcoming limitations of linear feedback},
	volume = {46},
	journal = {IEEE Trans. Autom. Control},
	author = {Beker, O. and Hollot, C.V. and Chait, Y.},
	month = nov,
	year = {2001},
	pages = {1797--1799},
}

@misc{groot_dissipativity_2025,
	title = {A {Dissipativity} {Framework} for {Constructing} {Scaled} {Graphs}},
	publisher = {arXiv},
	author = {de Groot, Timo  and Heemels, Maurice and van den Eijnden, Sebastiaan },
	month = jul,
	year = {2025},
	note = {arXiv:2507.08411 [math]},
}

@article{clegg_nonlinear_1958,
	title = {A nonlinear integrator for servomechanisms},
	volume = {77},
	journal = {Transactions of the American Institute of Electrical Engineers, Part II: Applications and Industry},
	author = {Clegg, J. C.},
	month = mar,
	year = {1958},
	pages = {41--42},
}

@article{zheng_experimental_2000,
	title = {Experimental demonstration of reset control design},
	volume = {8},
	journal = {Control Engineering Practice},
	author = {Zheng, Y. and Chait, Y. and Hollot, C. V. and Steinbuch, M. and Norg, M.},
	month = feb,
	year = {2000},
	pages = {113--120},
}

@article{van_den_eijnden_scaled_2024,
	series = {2024 {European} {Control} {Conference} {Special} {Issue}},
	title = {Scaled graphs for reset control system analysis},
	volume = {80},
	issn = {0947-3580},
	journal = {European Journal of Control},
	author = {van den Eijnden, S. and Chaffey, T. and Oomen, T. and Heemels, W. P. M. H.},
	month = nov,
	year = {2024},
	pages = {101050},
}

@inproceedings{zaccarian_first_2005,
	address = {Portland, OR, USA},
	title = {First order reset elements and the {Clegg} integrator revisited},
	isbn = {978-0-7803-9098-0},
	language = {en},
	booktitle = {Proceedings of the 2005, {American} {Control} {Conference}, 2005.},
	publisher = {IEEE},
	author = {Zaccarian, L. and Nesic, D. and Teel, A. R.},
	year = {2005},
	pages = {563--568},
}

@article{chait_horowitzs_2002,
	title = {On {Horowitz}'s contributions to reset control},
	volume = {12},
	journal = {Int. J. Robust Nonlinear Control},
	author = {Chait, Yossi and Hollot, C. V.},
	year = {2002},
	pages = {335--355},
}

@misc{nauta2025computable,
    title={Computable Characterisations of Scaled Relative Graphs of Closed Operators}, 
    author={Talitha Nauta and Richard Pates},
    year={2025},
    eprint={2511.08420},
    archivePrefix={arXiv},
    primaryClass={eess.SY},
    note = {arXiv:2511.08420 [eess.SY]},
}

@article{nesic_stability_2011,
	title = {Stability and {Performance} of {SISO} {Control} {Systems} {With} {First}-{Order} {Reset} {Elements}},
	volume = {56},
	journal = {IEEE Trans. Autom. Control},
	author = {Nesic, Dragan and Teel, Andrew R. and Zaccarian, Luca},
	month = nov,
	year = {2011},
	pages = {2567--2582},
}

@inproceedings{j_macfarlane_multivariable_1976,
	title = {Multivariable {Nyquist}-{Bode} and multivariable {Root}-{Locus} techniques},
	booktitle = {1976 {IEEE} {Conference} on {Decision} and {Control} including the 15th {Symposium} on {Adaptive} {Processes}},
	author = {Macfarlane, J. A. G.},
	month = dec,
	year = {1976},
	pages = {342--347},
}

@inproceedings{degrootExploitingStructureMIMO2025,
	title = {Exploiting {Structure} in {MIMO} {Scaled} {Graph} {Analysis}},
	issn = {2576-2370},
	urldate = {2026-04-17},
	booktitle = {2025 {IEEE} 64th {Conference} on {Decision} and {Control} ({CDC})},
	author = {de Groot, Timo and Oomen, Tom and Van Den Eijnden, Sebastiaan},
	month = dec,
	year = {2025},
	pages = {6517--6522},
}

@article{erlandsen_continuity_2026,
	title = {Continuity {Conditions} for {Piecewise} {Quadratic} {Functions} on {Simplicial} {Conic} {Partitions} are {Equivalent}},
	issn = {1558-2523},
	journal = {IEEE Trans. on Autom. Control},
	author = {Erlandsen, M. J. and Meijer, T. J. and Heemels, W. P. M. H. and Eijnden, S. J. A. M. van den},
	year = {2026},
	pages = {1--8},
}

@article{johanssonComputationPiecewiseQuadratic1998a,
	title = {Computation of piecewise quadratic {Lyapunov} functions for hybrid systems},
	volume = {43},
	issn = {1558-2523},
	number = {4},
	urldate = {2026-05-12},
	journal = {IEEE Trans. on Autom. Control},
	author = {Johansson, M. and Rantzer, A.},
	month = apr,
	year = {1998},
	pages = {555--559},
}

@article{chaffey_graphical_2023,
  title={Graphical nonlinear system analysis},
  author={Chaffey, T. and Forni, F. and Sepulchre, R.},
  journal={IEEE Trans. Autom. Control},
  volume={68},
  number={10},
  pages={6067--6081},
  year={2023},
  publisher={IEEE}
}

@article{ryu_scaled_2022,
  title={Scaled relative graphs: Nonexpansive operators via 2D Euclidean geometry},
  author={Ryu, E. K. and Hannah, R. and Yin, W.},
  journal={Mathematical Programming},
  volume={194},
  number={1},
  pages={569--619},
  year={2022},
  publisher={Springer}
}

@misc{chen2025softhardscaledrelative,
      title={Soft and Hard Scaled Relative Graphs for Nonlinear Feedback Stability}, 
      author={C. Chen and S. Z. Khong and R. Sepulchre},
      year={2025},
      eprint={2504.14407},
      archivePrefix={arXiv},
      primaryClass={eess.SY},
      note = {arXiv:2504.14407 [eess]},
}

@article{aangenent_performance_2010,
	title = {Performance analysis of reset control systems},
	volume = {20},
	issn = {1099-1239},
	number = {11},
	journal = {Int. J. Robust Nonlinear Control},
	author = {Aangenent, W. H. T. M. and Witvoet, G. and Heemels, W. P. M. H. and van de Molengraft, M. J. G. and Steinbuch, M.},
	year = {2010},
	pages = {1213--1233},
}

\end{document}